\documentclass[12pt]{article}

\usepackage[margin=2cm]{geometry}
\usepackage{amsmath,amssymb,amsthm,mathtools}
\usepackage[hidelinks]{hyperref}

\title{Green--Wasserstein Inequality on Compact Surfaces\footnote{To appear in \textit{Journal of Inequalities and Applications.}}}
\author{Maja Gwóźdź\\University of Zurich \& ETH Z\"{u}rich, Zurich, Switzerland\\\texttt{mgwozdz@ethz.ch}}
\date{}

\theoremstyle{plain}
\newtheorem{theorem}{Theorem}

\newtheorem{lemma}{Lemma}

\theoremstyle{definition}
\newtheorem{definition}{Definition}
\newtheorem{problem}{Problem}
\newtheorem{remark}{Remark}

\newcommand{\M}{M}
\newcommand{\dd}{\,\mathrm{d}}
\newcommand{\W}{W_2}
\newcommand{\dist}{\mathrm{d}_g}
\newcommand{\E}{\mathbb{E}}

\begin{document}
\maketitle

\begin{abstract}
Let $(\M,g)$ be a compact connected two-dimensional Riemannian manifold without boundary. In this note, we answer a question posed by Steinerberger: can one remove the $\sqrt{\log n}$ factor in the two-dimensional Green--Wasserstein inequality while keeping the unrenormalized off-diagonal Green term? We show that this is impossible on any compact connected surface: there is no inequality of the same form that holds uniformly over point sets with an $O(n^{-1/2})$ remainder for all $n$. We argue by contradiction and combine a second-moment estimate for the random Green energy of i.i.d. samples with the semi-discrete random matching asymptotics of Ambrosio--Glaudo.
\end{abstract}

\section{Introduction}

Let $(\M,g)$ be a compact connected two-dimensional Riemannian manifold without boundary, $\dist$ is the induced distance, and let $G(x,y)$ denote the symmetric mean-zero Green function of the Laplacian (Definition \ref{def:green}). We work with the normalized volume measure $dx=\mathrm{vol}(\M)^{-1}\dd\mathrm{vol}$. Whenever we integrate in the variable $y$, we write $dy$ for the same measure. For points $x_1,\dots,x_n\in \M$, we define the empirical measure:
\[
\mu_n \coloneqq \frac1n\sum_{i=1}^n \delta_{x_i}.
\]
Steinerberger \cite{Ste21} showed that for $d\ge 3$, the following Green--Wasserstein inequality holds:
\[
\W(\mu_n,dx)\ \lesssim_{\M}\ n^{-1/d} + \frac1n\left|\sum_{i\neq j} G(x_i,x_j)\right|^{1/2}.
\]
In \cite[Problem 53]{SteOpen}, he gives the following estimate:
\begin{equation}\label{eq:Steinerberger2D}
\W\!\left(\frac1n\sum_{k=1}^n \delta_{x_k},\,dx\right)
\lesssim_\M
\frac1n\left|\sum_{i\neq j} G(x_i,x_j)\right|^{1/2}
+
\begin{cases}
\sqrt{\frac{\log n}{n}} & \text{if } d=2,\\
n^{-1/d} & \text{if } d\ge 3.
\end{cases}
\end{equation}
In this note, we use
\[
\sum_{i\neq j}(\cdots)\ \coloneqq\ \sum_{\substack{1\le i,j\le n\\ i\neq j}}(\cdots)
\]
to denote the ordered sum over distinct indices. Notice that, by symmetry of $G$, it is twice the sum over $i<j$. Steinerberger asked whether the factor $\sqrt{\log n}$ in \eqref{eq:Steinerberger2D} can be removed while keeping the same Green-energy term. The precise question is as follows.

\begin{problem}[Steinerberger {\cite[Problem 53]{SteOpen}}]\label{prob:Steinerberger}
In dimension $d=2$, can one replace the remainder $\sqrt{\frac{\log n}{n}}$ in \eqref{eq:Steinerberger2D}
by $\frac{1}{\sqrt n}$ while keeping the same off-diagonal Green term? In other terms, does there exist
$C_\M>0$ such that for all $n\in\mathbb N$ and all $x_1,\dots,x_n\in \M$,
\[
\W\!\left(\frac1n\sum_{k=1}^n \delta_{x_k},\,dx\right)
\le C_\M\left(\frac1{\sqrt n}
+\frac1n\left|\sum_{i\neq j} G(x_i,x_j)\right|^{1/2}\right)?
\]
\end{problem}
\noindent Our main result, Theorem \ref{thm:main}, gives a negative answer to Problem \ref{prob:Steinerberger}.

\begin{remark}[The diagonal]\label{rem:diagonal}
We note that the Green function $G$ is smooth on $(\M\times \M)\setminus \{(x,x):x\in\M\}$ with a logarithmic singularity along the diagonal.
In a deterministic context, if $x_i=x_j$ for some $i\neq j$, then the term $G(x_i,x_j)$ is not finite. It is then natural to interpret the Green term in \eqref{eq:Steinerberger2D} and Problem \ref{prob:Steinerberger} as $+\infty$, in which case the inequality holds trivially.
In the random settings that we use in this note, collisions occur with probability $0$ because $dx$ is non-atomic.
\end{remark}

We now recall some definitions that will be useful in the main argument.
For Borel probability measures $\mu,\nu$ on $\M$, let us denote by $\Gamma(\mu,\nu)$ the set of couplings of $\mu$ and $\nu$ on $\M\times \M$.
The quadratic Wasserstein distance is
\[
\W(\mu,\nu)\coloneqq \left(\inf_{\gamma\in \Gamma(\mu,\nu)} \int_{\M\times \M} \dist(x,y)^2\,\dd\gamma(x,y)\right)^{1/2}.
\]
Given that $\M$ is compact, it follows that $\W(\mu,\nu)$ is always finite and satisfies $\W(\mu,\nu)\le \mathrm{diam}(\M)$. Let $\Delta$ be the Laplace--Beltrami operator on $(\M,g)$, which is realized here as a self-adjoint operator on $L^2(\M,dx)$ with domain $H^2(\M)$. Again, since $\M$ is compact and connected, $\ker(-\Delta)$ consists of the constant functions.

\begin{definition}[Mean-zero Green function]\label{def:green}
A mean-zero Green function for $-\Delta$ is a measurable function
\[
G:\M\times \M \to \mathbb{R}\cup\{+\infty\}
\]
with the following properties.
\begin{enumerate}
\item For each $x\in \M$, $y\mapsto G(x,y)$ is locally integrable on $\M$ and satisfies the normalization:
\begin{equation}\label{eq:GreenMeanZero}
\int_{\M} G(x,y)\,dy=0.
\end{equation}
\item For each $f\in C^\infty(\M)$ with $\int_{\M} f\,dx=0$, the function
\[
u(x)\coloneqq \int_{\M} G(x,y)\, f(y)\,dy
\]
is a weak solution of $-\Delta u = f$ with $\int_{\M}u\,dx=0$.
\end{enumerate}
\end{definition}

Recall that the uniqueness and existence of the mean-zero Green function, as a distribution kernel, are guaranteed. We can also choose it to be symmetric (for example, see \cite{Aubin98} or \cite{Rosenberg97}). We now fix a symmetric mean-zero Green function $G$, so that $G(x,y)=G(y,x)$ holds.
By symmetry and \eqref{eq:GreenMeanZero}, we obtain:
\[
\int_{\M}G(y,x)\,dy=0\qquad\text{for every }x\in \M.
\]

\begin{remark}[Local singularity]\label{rem:logsing}
In dimension $2$, the following classical local expansion (in geodesic normal coordinates) is true:
\[
G(x,y)= -\frac{1}{2\pi}\log \dist(x,y) + H(x,y).
\]
Note that $H$ extends continuously to the diagonal and is smooth off the diagonal.
In particular, we have $G(x,\cdot)\in L^1(\M,dx)$ for each fixed $x$.
\end{remark}

\begin{lemma}\label{lem:L2Green}
Let $(\M,g)$ be a compact connected $2$-dimensional Riemannian manifold without boundary, and let $G$ be the symmetric mean-zero Green function of $-\Delta$.
It then holds that $G\in L^2(\M\times \M, dx\otimes dx)$, and so
\[
\sigma^2 \coloneqq \int_{\M}\int_{\M} G(x,y)^2\,dx\,dy <\infty.
\]
\end{lemma}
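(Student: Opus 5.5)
We will prove the lemma by combining the local expansion of Remark \ref{rem:logsing} with compactness, reducing everything to the square-integrability of $(\log r)^2$ against the two-dimensional area element $r\,\dd r$.

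\textbf{Splitting near and far from the diagonal.} First we fix $\delta>0$ smaller than the injectivity radius of $(\M,g)$, and small enough that the expansion in Remark \ref{rem:logsing} holds whenever $\dist(x,y)<\delta$. The constant $\delta$ can be chosen uniformly in $x$ by compactness. We split $\M\times\M$ into the near-diagonal region $D_\delta=\{\dist(x,y)<\delta\}$ and its complement.

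\textbf{The far region.} On the complement of $D_\delta$, which is a compact set avoiding the diagonal, $G$ is smooth (Remark \ref{rem:diagonal}). It is therefore bounded there, and its square integral over this region is finite.

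\textbf{The near region.} On $D_\delta$ we write $G=-\frac{1}{2\pi}\log\dist+H$. Since $H$ extends continuously to the compact closure of $D_\delta$, including the diagonal, it is bounded, say $|H|\le C$. Using $(a+b)^2\le 2a^2+2b^2$, it suffices to bound
\[
\int_{\M}\int_{\{\dist(x,y)<\delta\}} (\log \dist(x,y))^2\,dy\,dx .
\]
For fixed $x$, we pass to geodesic polar coordinates around $x$. The Riemannian volume density is $\theta(r,\cdot)\,r\,\dd r\,\dd\phi$, where $\theta$ is bounded above uniformly in $x$ and $r<\delta$; this uniformity follows from smooth dependence of the exponential map and compactness. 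The inner integral is then at most
\[
C'\int_0^\delta (\log r)^2\, r\,\dd r<\infty,
\]
because $r(\log r)^2\to 0$ as $r\to0$. The bound is uniform in $x$, so integrating over $x$ against the probability measure $dx$ gives a finite result.

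\textbf{Conclusion and main obstacle.} Adding the two regions gives $\sigma^2<\infty$. The only delicate point is making the local expansion and the volume-density bound uniform in $x$, so that the constants $C$ and $C'$ do not depend on the base point. We handle this by compactness: either take $H$ continuous on a uniform tubular neighbourhood of the diagonal, as the expansion provides, or cover $\M$ by finitely many normal-coordinate charts. Measurability of $G^2$ on $\M\times\M$ is immediate, since $G$ is measurable.
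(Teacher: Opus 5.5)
Your proposal is correct and follows essentially the same route as the paper: split $\M\times\M$ into a near-diagonal region $\{\dist(x,y)<\delta\}$ and its compact complement, use boundedness of the smooth $G$ away from the diagonal and the continuity of $H=G+\frac{1}{2\pi}\log\dist$ up to the diagonal near it, and then reduce via geodesic polar coordinates with a uniform Jacobian bound $J_x\lesssim\rho$ to the finiteness of $\int_0^\delta(\log\rho)^2\rho\,\dd\rho$. The only cosmetic difference is that the paper bounds $H$ on the larger compact set $\{\dist\le 2r_0\}$ and splits at $r_0=\mathrm{inj}(\M)/4$, which settles in advance the uniformity-in-$x$ point you flag as the delicate step.
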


\begin{proof}
Recall that $\mathrm{vol}$ denotes the Riemannian volume measure on $(\M,g)$ and $dx = \mathrm{vol}(\M)^{-1}\dd\mathrm{vol}$. Let $\mathrm{inj}(\M)>0$ be the injectivity radius of $(\M,g)$ and set $r_0\coloneqq \mathrm{inj}(\M)/4$. We now apply standard results on the singularity structure of the Green function in dimension $2$ (for instance, see \cite{Aubin98}). In particular, the function
\[
H(x,y)\coloneqq G(x,y)+\frac{1}{2\pi}\log \dist(x,y),
\qquad x\neq y,
\]
extends continuously to the set $\{(x,y)\in \M\times\M: \dist(x,y)\le 2r_0\}$ (this also holds across the diagonal). Observe that this set is compact, so there exists $C_0>0$ such that
\[
|H(x,y)|\le C_0\qquad\text{whenever }\dist(x,y)\le 2r_0.
\]
As a result, for all $x\neq y$ with $\dist(x,y)<2r_0$, it follows that
\begin{equation}\label{eq:GreenLogBound}
|G(x,y)| \le \frac{1}{2\pi}|\log \dist(x,y)|+C_0 \le C\bigl(1+|\log \dist(x,y)|\bigr)
\end{equation}
for some constant $C>0$.

We now split $\M\times\M$ into the near-diagonal region
\[
D\coloneqq \{(x,y)\in\M\times\M: \dist(x,y)<r_0\}
\]
and its complement $D^c$. Notice that $D^c$ is compact and is always a positive distance away from the diagonal, so $G$ is smooth on $D^c$ and therefore bounded there. We obtain:
\[
\int_{D^c} G(x,y)^2\,dx\,dy<\infty.
\]
Let us now analyze $D$. By \eqref{eq:GreenLogBound}, we set
\[
I(x)\coloneqq \int_{B_{r_0}(x)} \bigl(1+|\log \dist(x,y)|\bigr)^2\,dy,
\]
so that
\[
\int_{D} G(x,y)^2\,dx\,dy
\le C^2\int_{\M} I(x)\,dx.
\]
We also fix $x\in\M$. Note that on the ball $B_{r_0}(x)$, we may use geodesic polar coordinates centered at $x$: $y=\exp_x(\rho\theta)$ with $\rho\in(0,r_0)$ and $\theta\in\mathbb{S}^1$. In these coordinates, the volume form has the following form:
\[
\dd\mathrm{vol}(y)=J_x(\rho,\theta)\,\dd\rho\,\dd\theta,
\qquad
J_x(\rho,\theta)=\rho\,a_x(\rho,\theta),
\]
where $a_x$ is smooth and satisfies $a_x(0,\theta)=1$.
In particular, by compactness, there exists $C_1>0$ with:
\begin{equation}\label{eq:JacobianBound}
J_x(\rho,\theta)\le C_1\rho\qquad\text{for all }x\in\M,\ \theta\in\mathbb{S}^1,\ \rho\in(0,r_0)
\end{equation}
(see, for instance, \cite{Chavel84}).
We now apply the normalization of $dx$ and obtain:
\begin{align*}
\int_{B_{r_0}(x)} \bigl(1+|\log \dist(x,y)|\bigr)^2\,dy
&=\frac{1}{\mathrm{vol}(\M)}\int_{\mathbb{S}^1}\!\!\int_0^{r_0} \bigl(1+|\log \rho|\bigr)^2 J_x(\rho,\theta)\,\dd\rho\,\dd\theta\\
&\le \frac{C_1}{\mathrm{vol}(\M)}\int_{\mathbb{S}^1}\!\!\int_0^{r_0} \bigl(1+|\log \rho|\bigr)^2 \rho\,\dd\rho\,\dd\theta.
\end{align*}
The right-hand side is finite since near $0$ the function $\rho\mapsto \rho(\log \rho)^2$ is integrable. We apply the substitution $t=-\log\rho$ (so $\rho=e^{-t}$ and $\rho\,\dd\rho=-e^{-2t}\,\dd t$) and get:
\[
\int_0^{r_0} \bigl(1+|\log \rho|\bigr)^2\rho\,\dd\rho
=\int_{\infty}^{-\log r_0} \bigl(1+|t|\bigr)^2(-e^{-2t})\,\dd t
=\int_{-\log r_0}^{\infty} \bigl(1+|t|\bigr)^2 e^{-2t}\,\dd t <\infty.
\]
It follows that $\int_{B_{r_0}(x)} (1+|\log \dist(x,y)|)^2\,dy$ is bounded uniformly in $x$.
If we integrate over $x$, we arrive at $\int_D G^2\,dx\,dy<\infty$. Finally, we combine the estimates on $D$ and $D^c$. This finishes the proof of $\sigma^2<\infty$.
\end{proof}

\section{A second-moment bound for the Green energy}

Let $(X_i)_{i\ge 1}$ be i.i.d. $\M$-valued random variables with common law $dx$.
Let $\mu_n$ be the empirical measure defined above, with $x_i=X_i$, and set
\[
S_n \coloneqq \sum_{i\neq j} G(X_i,X_j).
\]
Since $G\in L^2(\M\times\M,dx\otimes dx)$ and $(dx\otimes dx)(\M\times\M)=1$, we have
\[
\|G\|_{L^1(\M\times\M,dx\otimes dx)}\le \|G\|_{L^2(\M\times\M,dx\otimes dx)},
\]
hence $G\in L^1(\M\times\M,dx\otimes dx)$ and $S_n\in L^1$.

\begin{lemma}\label{lem:mean0}
It holds that $\E[S_n]=0$.
\end{lemma}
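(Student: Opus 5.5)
The plan is to use linearity of expectation together with the mean-zero normalization \eqref{eq:GreenMeanZero} and Fubini's theorem. Since $S_n=\sum_{i\neq j}G(X_i,X_j)$ is a finite sum of $n(n-1)$ terms, each integrable by Lemma \ref{lem:L2Green}, we may write
\[
\E[S_n]=\sum_{i\neq j}\E\bigl[G(X_i,X_j)\bigr].
\]
It then suffices to show that each summand vanishes. For $i\neq j$, the variables $X_i$ and $X_j$ are independent with common law $dx$. Hence the joint law of $(X_i,X_j)$ is $dx\otimes dx$, and
\[
\E\bigl[G(X_i,X_j)\bigr]=\int_{\M\times\M}G(x,y)\,d(x\otimes y).
\]
In particular, this expectation does not depend on the pair $(i,j)$.

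Next we evaluate the double integral by Fubini's theorem. This is legitimate because $G\in L^1(\M\times\M,dx\otimes dx)$, as noted just before the statement: it follows from $G\in L^2$ (Lemma \ref{lem:L2Green}) on a probability space. Fubini therefore gives
\[
\int_{\M\times\M}G(x,y)\,dx\,dy=\int_{\M}\Bigl(\int_{\M}G(x,y)\,dy\Bigr)dx.
\]
By \eqref{eq:GreenMeanZero}, the inner integral equals $0$ for every $x\in\M$, so the whole double integral is $0$. We conclude that $\E[S_n]=0$.

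The only genuine point to check is integrability, which justifies both the linearity step and the interchange of integrals. The log singularity means $G$ is not bounded, so we cannot simply appeal to boundedness; instead, absolute integrability on the product space is supplied by Lemma \ref{lem:L2Green}. The diagonal causes no difficulty, since it has $dx\otimes dx$-measure zero (Remark \ref{rem:diagonal}), and $G(x,\cdot)\in L^1$ for every $x$ by Remark \ref{rem:logsing}, so the inner integral is defined pointwise.
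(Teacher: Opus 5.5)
Your proof is correct and is essentially the paper's argument. The paper writes the key step as $\E[G(X_i,X_j)\mid X_i]=\int_{\M}G(X_i,y)\,dy=0$ a.s.\ (justified by Fubini--Tonelli and $G\in L^1(dx\otimes dx)$) and then takes expectations and sums over pairs; this is your iterated-integral computation written as a conditional expectation (one small notational fix: the measure in your display should be $dx\otimes dx$ rather than $d(x\otimes y)$).
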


\begin{proof}
We fix $i\neq j$. Since $G\in L^1(dx\otimes dx)$ is true, by the Fubini--Tonelli theorem,  we know that for $dx$-a.e. $x$ the section $y\mapsto G(x,y)$ is integrable, and the mean-zero normalization \eqref{eq:GreenMeanZero} gives
\[
\E\!\left[ G(X_i,X_j)\mid X_i\right]
=\int_{\M} G(X_i,y)\,dy=0\quad\text{a.s.}
\]
We take expectations and get $\E[G(X_i,X_j)]=0$.
Finally, it suffices to sum over all ordered pairs $(i,j)$ with $i\neq j$ to obtain $\E[S_n]=0$.
\end{proof}

\begin{lemma}\label{lem:secondmoment}
Let $\sigma^2$ be as in Lemma \ref{lem:L2Green}.
We have:
\[
\E[S_n^2]=2n(n-1)\sigma^2.
\]
It follows that
\[
\E|S_n| \le \sqrt{2n(n-1)}\,\sigma \le \sqrt{2}\,\sigma\, n.
\]
\end{lemma}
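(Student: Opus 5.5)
We expand the square of $S_n$ as a double sum over ordered pairs,
\[
\E[S_n^2]=\sum_{i\neq j}\sum_{k\neq l}\E\bigl[G(X_i,X_j)G(X_k,X_l)\bigr],
\]
and classify the terms by how many indices the pairs $\{i,j\}$ and $\{k,l\}$ share. Every term is integrable: by Lemma \ref{lem:L2Green}, $G\in L^2(dx\otimes dx)$, so each product $G(X_i,X_j)G(X_k,X_l)$ is integrable by Cauchy--Schwarz. This justifies the expansion and the use of Fubini below.

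\emph{Disjoint pairs.} If $\{i,j\}\cap\{k,l\}=\emptyset$, the two factors are independent. By Lemma \ref{lem:mean0} each factor has mean zero, so the expectation vanishes.

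\emph{One shared index.} If exactly one index is shared, say the common index is $m$ and the other two indices $a,b$ are distinct from $m$ and from each other, then by symmetry of $G$ the term equals $\E[G(X_m,X_a)G(X_m,X_b)]$. We condition on $X_m$: given $X_m$, the variables $X_a$ and $X_b$ are independent. Hence the conditional expectation is
\[
\Bigl(\int G(X_m,y)\,dy\Bigr)^2,
\]
which vanishes almost surely by \eqref{eq:GreenMeanZero}. The mean-zero identity holds for a.e.\ $X_m$, as noted in the proof of Lemma \ref{lem:mean0}.

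This conditioning step is the only delicate point. To apply Fubini we must know that the section $y\mapsto G(x,y)$ lies in $L^1$ for a.e.\ $x$, and that the product is integrable so that conditioning is legitimate. Both follow from $G\in L^2$ via Tonelli: for a.e.\ $x$, the function $G(x,\cdot)$ lies in $L^2\subset L^1$, and for such $x$
\[
\E\bigl[|G(x,X_a)G(x,X_b)|\bigr]=\|G(x,\cdot)\|_{L^1}^2\le\|G(x,\cdot)\|_{L^2}^2,
\]
which is integrable in $x$.

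\emph{Same unordered pair.} If $\{k,l\}=\{i,j\}$, the term equals $\E[G(X_i,X_j)^2]=\sigma^2$, again using symmetry of $G$. For each of the $n(n-1)$ ordered pairs $(i,j)$ there are exactly two ordered pairs $(k,l)$ with $\{k,l\}=\{i,j\}$, namely $(i,j)$ and $(j,i)$. This gives
\[
\E[S_n^2]=2n(n-1)\sigma^2.
\]

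Finally, Cauchy--Schwarz (or Jensen) gives
\[
\E|S_n|\le(\E[S_n^2])^{1/2}=\sqrt{2n(n-1)}\,\sigma\le\sqrt2\,\sigma\,n,
\]
where the last inequality uses $n(n-1)\le n^2$.
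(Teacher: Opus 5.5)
Your proposal is correct and follows essentially the same route as the paper. You expand $S_n^2$, classify terms by index overlap, kill the disjoint terms by independence and the one-overlap terms by conditioning on the shared variable, and count the $2n(n-1)$ full-overlap terms before finishing with Cauchy--Schwarz. Your explicit Tonelli check that the one-shared-index products are integrable and factorize conditionally is slightly more careful than the paper's treatment of that step.
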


\begin{proof}
For $i\neq j$, we set $h_{ij}\coloneqq G(X_i,X_j)$.
Since $G\in L^2(dx\otimes dx)$ and $(X_i,X_j)\sim dx\otimes dx$ for $i\neq j$, we have $h_{ij}\in L^2$. For an arbitrary $(i,j)\neq(k,\ell)$ the product $h_{ij}h_{k\ell}\in L^1$ by Cauchy--Schwarz, and all expectations below are well defined.

We have
\[
S_n = \sum_{i\neq j} h_{ij},
\qquad
S_n^2 = \sum_{i\neq j}\sum_{k\neq \ell} h_{ij}h_{k\ell},
\qquad
\E[S_n^2] = \sum_{i\neq j}\sum_{k\neq \ell} \E[h_{ij}h_{k\ell}].
\]
We claim that $\E[h_{ij}h_{k\ell}]=0$ unless $(k,\ell)=(i,j)$ or $(k,\ell)=(j,i)$. Notice that if $\{i,j\}\cap\{k,\ell\}=\varnothing$, then $h_{ij}$ and $h_{k\ell}$ are independent. Moreover, each has mean $0$ by Lemma \ref{lem:mean0}, so $\E[h_{ij}h_{k\ell}]=0$. If $\{i,j\}$ and $\{k,\ell\}$ intersect in exactly one index, for example, for $i=k$ while $j\neq \ell$, then conditioning on $X_i$ and using conditional independence of $X_j$ and $X_\ell$ gives
\[
\E[h_{ij}h_{i\ell}\mid X_i]
=\E[h_{ij}\mid X_i]\;\E[h_{i\ell}\mid X_i]=0\cdot 0=0,
\]
again by \eqref{eq:GreenMeanZero}.
All other one-index overlap cases are identical. This follows directly from conditioning on the shared random variable and applying \eqref{eq:GreenMeanZero} in the correct slot. We conclude that only the two full-overlap cases contribute:
\begin{align*}
\E[h_{ij}^2]&=\E[G(X_1,X_2)^2]=\sigma^2,\\
\E[h_{ij}h_{ji}]&=\E[G(X_i,X_j)G(X_j,X_i)]=\E[G(X_1,X_2)^2]=\sigma^2
\end{align*}
(by symmetry $G(x,y)=G(y,x)$).
It follows that:
\[
\E[S_n^2]
=\sum_{i\neq j}\E[h_{ij}^2] + \sum_{i\neq j}\E[h_{ij}h_{ji}]
= n(n-1)\sigma^2+n(n-1)\sigma^2
=2n(n-1)\sigma^2.
\]
Finally, by Cauchy--Schwarz, it follows immediately that $\E|S_n|\le \sqrt{\E[S_n^2]}$.
\end{proof}

\begin{remark}[U-statistics]
Observe that Lemma \ref{lem:secondmoment} is a degenerate $U$-statistic case, that is, the kernel $G$ has mean zero, so by the Hoeffding decomposition, there is no linear term and all the cross terms also vanish.
\end{remark}

\section{Main result}

\begin{theorem}[The $\sqrt{\log n}$ remainder]\label{thm:main}
Let $G$ be the symmetric mean-zero Green function of $-\Delta$.
There does not exist a constant $C_{\M}>0$ such that for all $n\in\mathbb{N}$ and all $x_1,\dots,x_n\in \M$,
\begin{equation}\label{eq:NoUniversalO1sqrtN}
\W\!\left(\frac1n\sum_{i=1}^n\delta_{x_i},\, dx\right)
\le
C_{\M}\left(\frac{1}{\sqrt{n}}+\frac1n\left|\sum_{i\neq j} G(x_i,x_j)\right|^{1/2}\right).
\end{equation}
In particular, the $\sqrt{\log n}$ factor in the two-dimensional inequality \eqref{eq:Steinerberger2D} cannot be removed if the unrenormalized Green-energy term were to be preserved.
\end{theorem}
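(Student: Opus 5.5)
We argue by contradiction. Suppose a constant $C_\M$ exists such that \eqref{eq:NoUniversalO1sqrtN} holds for all $n$ and all point configurations. We apply it to the random i.i.d. configuration $x_i=X_i$ with law $dx$. By Remark \ref{rem:diagonal}, collisions have probability zero, so the inequality holds almost surely with a finite right-hand side. Squaring and using $(a+b)^2\le 2a^2+2b^2$, we get almost surely
\[
\W(\mu_n,dx)^2\le 2C_\M^2\left(\frac1n+\frac{|S_n|}{n^2}\right).
\]
We then take expectations and invoke Lemma \ref{lem:secondmoment}, which gives $\E|S_n|\le\sqrt2\,\sigma n$. This yields the uniform bound
\[
\E\bigl[\W(\mu_n,dx)^2\bigr]\le 2C_\M^2\,\frac{1+\sqrt2\,\sigma}{n}\qquad\text{for all }n.
\]
The point is that the Green term contributes only $O(1/n)$ on average, which is the same order as the remainder.

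The second step is to contradict this bound with the known lower asymptotics for random semi-discrete matching on compact surfaces. Ambrosio--Glaudo proved that for i.i.d. samples from the normalized volume measure on a compact two-dimensional manifold,
\[
\lim_{n\to\infty}\frac{n}{\log n}\,\E\bigl[\W(\mu_n,dx)^2\bigr]=\frac{\mathrm{vol}(\M)}{4\pi}.
\]
The constant is positive, with the exact value depending on the normalization of the volume. Only positivity of the limit matters here. It implies $\E[\W(\mu_n,dx)^2]\ge c\,\frac{\log n}{n}$ for all large $n$ and some $c>0$. Comparing with the display above gives $c\log n\le 2C_\M^2(1+\sqrt2\,\sigma)$ for all large $n$, which is impossible. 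Hence no such $C_\M$ exists. Since \eqref{eq:NoUniversalO1sqrtN} fails for some configuration for each candidate constant, the $\sqrt{\log n}$ factor cannot be replaced by $1$ while the unrenormalized Green term is kept.

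The main obstacle is the correct invocation of the Ambrosio--Glaudo result. Their theorem is stated for the Riemannian volume measure on a compact two-dimensional manifold, possibly after normalization. I would check that it applies to the normalized $dx$ and that it gives a genuine lower bound on the expectation of $\W^2$, rather than only convergence in some weaker sense. If only a $\liminf$-type lower bound is available, that is sufficient.

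A further subtlety is whether one needs the lower bound on $\E[\W^2]$ or on $\E[\W]$. I would work with $\W^2$ throughout, as above, and so avoid Jensen in the wrong direction. Alternatively, one could use Markov's inequality on $|S_n|/n$ to get a set of probability at least $1/2$ on which the Green term is $O(n^{-1/2})$. Combined with concentration of $\W^2$, this would show that some configuration violates \eqref{eq:NoUniversalO1sqrtN}, but the expectation argument is cleaner.
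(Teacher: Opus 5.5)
Your proposal is correct and follows the paper's proof essentially step for step. You apply the inequality to i.i.d.\ samples, square, take expectations, and use Lemma \ref{lem:secondmoment} to get $\E[\W(\mu_n,dx)^2]\le 2C_\M^2(1+\sqrt2\,\sigma)/n$, which the Ambrosio--Glaudo $\frac{\log n}{n}$ asymptotics contradict. The only difference is that you invoke the limit form of the matching asymptotics, whereas the paper quotes the quantitative two-sided error bound after rescaling $g$ to unit volume; either suffices, since only a lower bound $c\,\frac{\log n}{n}$ for large $n$ is needed.
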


\begin{proof}
We assume for contradiction that there exists $C_{\M}>0$ such that \eqref{eq:NoUniversalO1sqrtN} holds for all $n$ and all point sets.
We apply it to the random configuration $X_1,\dots,X_n$.
Given that the inequality is deterministic and holds for all configurations, the following holds almost surely:
\[
\W(\mu_n,dx)\le C_{\M}\frac{1}{\sqrt{n}} + \frac{C_{\M}}{n}\,|S_n|^{1/2}.
\]
We now square and use $(a+b)^2\le 2a^2+2b^2$, which gives almost surely
\[
\W(\mu_n,dx)^2 \le \frac{2C_{\M}^2}{n} + \frac{2C_{\M}^2}{n^2}\,|S_n|.
\]
It suffices to take expectations (by using the uniform bound $\W(\mu_n,dx)\le \mathrm{diam}(\M)$ from Section 2.1) and apply Lemma \ref{lem:secondmoment},
\begin{equation}\label{eq:EW2upper}
\E\bigl[\W(\mu_n,dx)^2\bigr]
\le \frac{2C_{\M}^2}{n} + \frac{2C_{\M}^2}{n^2}\,\E|S_n|
\le \frac{2C_{\M}^2}{n}+\frac{2C_{\M}^2}{n^2}\cdot(\sqrt{2}\,\sigma\, n)
= \frac{C^\ast}{n},
\end{equation}
where
\[
C^\ast \coloneqq 2C_{\M}^2\bigl(1+\sqrt{2}\,\sigma\bigr)
\]
is a constant independent of $n$.

On the other hand, Ambrosio--Glaudo \cite{AG19} proved a useful two-sided asymptotic
for the semi-discrete matching problem on any compact closed surface. To match their
normalization, let $\tilde g = \mathrm{vol}(\M)^{-1} g$, so that $d\mathrm{vol}_{\tilde g}=dx$ and $\W$ scales by $\mathrm{vol}(\M)^{1/2}$. We apply \cite[Theorem 1.2]{AG19} to $(\M,\tilde g)$
and scale back, which implies that there exists a constant $C_{\mathrm{AG}}=C_{\mathrm{AG}}(\M)>0$ such that for
all integers $n\ge 3$,
\begin{equation}\label{eq:AGerror}
\left|\E\!\left[\W(\mu_n,dx)^2\right]-\frac{\mathrm{vol}(\M)}{4\pi}\,\frac{\log n}{n}\right|
\le
\frac{C_{\mathrm{AG}}}{n}\sqrt{\log n\,\log\log n}.
\end{equation}
See also \cite[Theorem 1.2]{AG19} and note the absolute value.

In particular, \eqref{eq:AGerror} implies the following lower bound
\begin{equation}\label{eq:AGlower}
\E\bigl[\W(\mu_n,dx)^2\bigr]
\ge
\frac{\mathrm{vol}(\M)}{4\pi}\,\frac{\log n}{n}
-\frac{C_{\mathrm{AG}}}{n}\sqrt{\log n\,\log\log n}.
\end{equation}
Since $\sqrt{\log n\,\log\log n}=o(\log n)$, there exists $n_1\ge 3$ such that
$C_{\mathrm{AG}}\sqrt{\log n\,\log\log n}\le \frac{\mathrm{vol}(\M)}{8\pi}\log n$ for all $n\ge n_1$.
This implies that \eqref{eq:AGlower} gives, for all $n\ge n_1$,
\begin{equation}\label{eq:AGlowerSimple}
\E\bigl[\W(\mu_n,dx)^2\bigr]\ge \frac{\mathrm{vol}(\M)}{8\pi}\,\frac{\log n}{n}.
\end{equation}

We now compare \eqref{eq:EW2upper} and \eqref{eq:AGlowerSimple}, and obtain for all $n\ge n_1$,
\[
\frac{\mathrm{vol}(\M)}{8\pi}\,\frac{\log n}{n}\le \E\bigl[\W(\mu_n,dx)^2\bigr]\le \frac{C^\ast}{n},
\]
so $\log n \le \frac{8\pi C^\ast}{\mathrm{vol}(\M)}$ for all $n\ge n_1$, which is impossible. This contradiction proves that no such constant $C_{\M}$ can exist.
\end{proof}

\begin{remark}[Obstruction]
In Theorem \ref{thm:main}, we rule out only universal inequalities of the exact form in Problem \ref{prob:Steinerberger}, that is, those with the unrenormalized off-diagonal Green term $\sum_{i\neq j}G(x_i,x_j)$. We emphasize that it does not preclude $n^{-1/2}$ rates for concrete deterministic point sets, nor does it preclude bounds with an appropriately renormalized Green energy.
\end{remark}

\end{document}